\begin{document}

\title[Global Lipschitz estimates for nonstandard growth]{A boundary regularity result for minimizers of variational integrals with nonstandard growth}
\author[ M.~Bul\'{\i}\v{c}ek]{Miroslav Bul\'{\i}\v{c}ek}
\address{Charles University, Faculty of Mathematics and Physics, Mathematical Institute, Sokolovsk\'{a} 83, 186~75, Prague, Czech Republic}
\email{mbul8060@karlin.mff.cuni.cz}

\author[E.~Maringov\'{a}]{Erika Maringov\'{a}}
\address{Charles University, Faculty of Mathematics and Physics, Mathematical Institute, Sokolovsk\'{a} 83, 186~75, Prague, Czech Republic}
\email{maringova@karlin.mff.cuni.cz}

\author[B.~Stroffolini]{Bianca Stroffolini}
\address{Dipartimento di Matematica e Applicazioni, Universit\`{a} di Napoli ''Federico II", via Cintia, 80126 Napoli, Italy}
\email{bstroffo@unina.it}

\author[A.~Verde]{Anna Verde}
\address{Dipartimento di Matematica e Applicazioni, Universit\`{a} di Napoli "Federico II", via Cintia, 80126 Napoli, Italy}
\email{anverde@unina.it}

\thanks{M.~Bul\'{\i}\v{c}ek's  was  supported
by the Czech Science Foundation (grant no. 16-03230S) and he is a member of the Ne\v{c}as center for Mathematical Modeling. E.~Maringov\'{a} thanks to the project  SVV-2017-260455.  The other authors are members of the Gruppo Nazionale per l'Analisi
Matematica, la Probabilit\`a e le loro Applicazioni (GNAMPA) of the
Istituto Nazionale di Alta Matematica (INdAM)}

\dedicatory{Dedicated to Professor Carlo Sbordone on the occasion of his 70th birthday.}

\subjclass[2000]{Primary: 35J47; Secondary: 35J25}
\keywords{Elliptic system, existence of solutions, nonstandard growth conditions, boundary regularity}

\begin{abstract}
We prove global Lipschitz regularity for a wide class of convex variational integrals  among all functions in $W^{1,1}$ with prescribed (sufficiently regular) boundary values, which are not assumed to satisfy any geometrical constraint (as for example bounded slope condition). Furthermore, we do not assume any restrictive assumption on the geometry of the domain and the result is valid for all sufficiently smooth domains.  The result is achieved with a suitable approximation of the functional together with a new construction of  appropriate barrier functions.
\end{abstract}

\maketitle

\newcommand{\ddd}{\,{\rm d}}
%\numberwithin{equation}{section} \marginparwidth=2cm

\def\note#1{\marginpar{\small #1}}
\def\tens#1{\pmb{\mathsf{#1}}}
\def\vec#1{\boldsymbol{#1}}
\def\norm#1{\left|\!\left| #1 \right|\!\right|}
\def\fnorm#1{|\!| #1 |\!|}
\def\abs#1{\left| #1 \right|}
\def\ti{\text{I}}
\def\tii{\text{I\!I}}
\def\tiii{\text{I\!I\!I}}

\newcommand{\loc}{{\rm loc}}
\def\diver{\mathop{\mathrm{div}}\nolimits}
\def\grad{\mathop{\mathrm{grad}}\nolimits}
\def\Div{\mathop{\mathrm{Div}}\nolimits}
\def\Grad{\mathop{\mathrm{Grad}}\nolimits}
\def\tr{\mathop{\mathrm{tr}}\nolimits}
\def\cof{\mathop{\mathrm{cof}}\nolimits}
\def\det{\mathop{\mathrm{det}}\nolimits}
\def\lin{\mathop{\mathrm{span}}\nolimits}
\def\pr{\noindent \textbf{Proof: }}

\def\pp#1#2{\frac{\partial #1}{\partial #2}}
\def\dd#1#2{\frac{\d #1}{\d #2}}
\def\bA{\tens{A}}
\def\T{\mathcal{T}}
\def\R{\mathcal{R}}
\def\bx{\vec{x}}
\def\be{\vec{e}}
\def\bef{\vec{f}}
\def\bec{\vec{c}}
\def\bs{\vec{s}}
\def\ba{\vec{a}}
\def\bn{\vec{n}}
\def\bphi{\vec{\varphi}}
\def\btau{\vec{\tau}}
\def\bc{\vec{c}}
\def\bg{\vec{g}}
\def\mO{\mathcal{O}}
\def\pmO{\partial\mathcal{O}}
\def\bE{\tens{\varepsilon}}
\def\bsig{\tens{\sigma}}
\def\bW{\tens{W}}
\def\bT{\tens{T}}
\def\bxi{\tens{\xi}}
\def\bD{\tens{D}}
\def\bF{\tens{F}}
\def\bB{\tens{B}}
\def\bV{\tens{V}}
\def\bS{\tens{S}}
\def\bI{\tens{I}}
\def\bi{\vec{i}}
\def\bv{\vec{v}}
\def\bfi{\vec{\varphi}}
\def\bk{\vec{k}}
\def\b0{\vec{0}}
\def\bom{\vec{\omega}}
\def\bw{\vec{w}}
\def\p{\pi}
\def\bu{\vec{u}}
\def\bz{\vec{z}}
\def\bep{\vec{e}_{\textrm{p}}}
\def\dbep{\dot{\vec{e}}_{\textrm{p}}}
\def\bee{\vec{e}_{\textrm{el}}}
\def\dbee{\dot{\vec{e}}_{\textrm{el}}}
\def\ID{\mathcal{I}_{\bD}}
\def\IP{\mathcal{I}_{p}}
\def\Pn{(\mathcal{P})}
\def\Pe{(\mathcal{P}^{\eta})}
\def\Pee{(\mathcal{P}^{\varepsilon, \eta})}
\def\dbx{\,{\rm d} \bx}
\def\dx{\,{\rm d}x}
\def\dr{\,{\rm d}r}
\def\ds{\,{\rm d}s}
\def\dt{\,{\rm d}t}
\def\omer{\omega^{q,r_0}}
\def\ber{b^q}
\def\ver{v^{q,r_0}_{\bk, c}}
\def\hf{\hat{f}}

\def\F{\tilde F}
\def\a{\tilde a}
%------------------------------------------------

\newtheorem{Theorem}{Theorem}[section]

\newtheorem{Example}{Example}[section]
\newtheorem{Lemma}{Lemma}[section]
\newtheorem{Rem}{Remark}[section]
\newtheorem{Def}{Definition}[section]
\newtheorem{Col}{Corollary}[section]

\numberwithin{equation}{section}

%\maketitle

\section{Introduction}
In this paper we are concerned with the existence of (unique) scalar-valued Lipschitz solutions to the Dirichlet problem
\begin{equation}\label{P1}
\begin{aligned}
-\diver \left(a(|\nabla u|)\nabla u \right)&=0 &&\textrm{ in } \Omega,\\
u&=u_0 &&\textrm{ on } \partial \Omega,
\end{aligned}
\end{equation}
where $\Omega \subset \mathbb{R}^d$ is a bounded, regular domain and with regular prescribed boundary values~$u_0$. In this setting, the existence of a weak solution to the Dirichlet problem~\eqref{P1} is equivalent to the existence of a minimizer of a related (convex) variational integral in the Dirichlet class $u_0 + W^{1,1}_0(\Omega)$, and we may equivalently look for a function $u \in u_0 + W^{1,1}_0(\Omega)$ such that for all smooth, compactly supported test function $\varphi \in \mathcal{D}(\Omega)$ we have
\begin{equation}\label{min}
\int_{\Omega}F(|\nabla u|)\dx \le \int_{\Omega}F(|\nabla u_0 +\nabla \varphi|)\dx,
\end{equation}
where $F$ and $a$ are linked via the identity
\begin{equation}
F'(s)=a(s)s \qquad \text{for all } s \in \mathbb{R}^+.
\label{min2}
\end{equation}

For the later use, it is convenient to summarize the formulas following from \eqref{min2} at this place,
\begin{equation}\label{af}
\begin{aligned}
a(s) &= \frac{F'(s)}{s}, \hspace{1cm} a'(s) = \frac{F''(s)}{s} - \frac{F'(s)}{s^2}, \\
s \frac{a'(s)}{a(s)} &= s \frac{F''(s)}{F'(s)} -1,
\end{aligned}
\end{equation}
which are valid for any $s \in \mathbb{R}^+$.

We study the existence of Lipschitz minimizers of the problem \eqref{min} in a wide class of convex variational integrals ranging from nearly linear growth right up to exponential one including also these borderline cases. We can also treat  functionals  with  the so-called $(p, q)$-structure, see \cite{Ma}.
A classical example of oscillating function between $p$ and $q$ growth is  the following function $F:[0,\infty)\to [0,\infty)$
\begin{equation}\label{example}
F(t):=
\left\{\begin{array}{ll}
\displaystyle t^p &\text{if $ 0\leq t\leq t_0$,}
\\
\displaystyle t^{(\frac{p+q}{2}+\frac{p-q}{2}\sin\log\log\log t)}&\text{if $t>t_0$,}
\end{array}\right.
\end{equation}
where $t_0>0$ is chosen so that $\sin \log \log \log t _0
= 1$ (this function was first given as an example in \cite{DMP}).

For scalar functions the Lipschitz continuity of minimizers has been investigated using the bounded slope condition or the barrier functions.
When $F$ is convex, the validity of the so-called bounded slope condition (BSC) due to Hartmann, Nirenberg and Stampacchia (which is a geometrical assumption on the boundary data $u_0$) ensures the existence of a minimizer among Lipschitz functions, see \cite{S}. In addition, if $F$  is also strictly convex, every continuous $W^{1,1}$- minimizer is Lipschitz continuous on $\Omega$, see \cite{Cellina,Cellina2}. Local Lipschitz regularity of solutions was proved by F.~H.~Clarke in \cite{Clarke} for strictly convex, $p$-coercive ($p>1$) functions F under a weaker condition on $u_0$, the so-called lower bounded slope condition, a condition corresponding to the left-sided version of the BSC, see also \cite{MT}.

Let us also mention the Perron method \cite{Perron}, originally developed
for the Laplace equation, and generalized for analyzing Dirichlet boundary value problems
to various elliptic partial differential equations. The idea is to
construct an upper solution of the Dirichlet problem as an infimum of a certain upper class of
supersolutions. A lower solution is constructed similarly using a lower class of subsolutions,
and when the upper and lower solutions coincide we obtain a  solution. In the
Perron method, the boundary regularity is essentially a separate problem from the existence
of a solution. The use  of barrier functions as a tool for studying boundary regularity seems to
go back to the Lebesgue  paper \cite{L1}. In  \cite{L2}  Lebesgue characterised regular
boundary points in terms of barriers for the linear Laplace equation.
The extension of Perron's method and the method of barriers to the nonlinear $p$-Laplacian
was initiated by Granlund, Lindqvist and Martio in \cite{GLM} and developed in a series of papers
(see, for example, the accounts given in Heinonen et al. \cite{HKM}).

%The Perron method (Harmonic replacement) for solving the Dirichlet problem associated to the Laplace equation is classical. It is based on the maximum principle and the submean value property for subharmonic functions.
%It has been extended also to nonlinear operators as the $p$-Laplacian, see the book \cite{HKM}.

In this paper, the attainment of the boundary data is performed by constructing a barrier at regular points.
It seems  that the simplest condition for regular points is the {\it exterior ball condition}.
\begin{Def}\label{omega}
A domain $\Omega$ satisfies the uniform exterior ball condition if there exists a number $r_0>0$
such that for every point $x_0\in \partial\Omega$
 there is a ball $B_{r_0}(x_0)$ such that $B_{r_0}(x_0)\cap \Omega=\{x_o\}$.
\end{Def}
\begin{Rem}\label{RRMB}
Convexity or $C^{1,1}$-regularity of the domain are sufficient for the uniform exterior ball condition,
see e.g. \cite{GT}, thus, Theorem \ref{T1} holds in particular for all convex domains
of class $C^1$ and for arbitrary domains of class $C^{1,1}$.
\end{Rem}

In the linear case the method of the barrier function for domains satisfying exterior ball condition has been presented in  \cite{BBM}. More precisely, in \cite{BBM} the authors study the minimization of convex, variational integrals of linear growth. Due to insufficient compactness properties of these Dirichlet classes,
the existence of solutions does not follow in a standard way by the direct method in
the calculus of variations. Assuming radial structure, they establish a necessary
and sufficient condition on the integrand such that the Dirichlet problem is in general
solvable, in the sense that a Lipschitz solution exists for any regular domain and all
prescribed regular boundary values, via the construction of appropriate barrier functions
in the spirit of Serrin's paper \cite{S}.

In this paper, we significantly generalize the method used in \cite{BBM} and we are able to treat also the case of variational integrals ranging from nearly linear growth right up to exponential one. Our main result is the following.
\begin{Theorem}\label{T1}
Let $F\in \mathcal{C}^2(\mathbb{R}^+)$ be a strictly convex function with $\lim_{s \to 0} F'(s) = 0$ which satisfies, for some constants $C_1,C_2 >0$,
\begin{align}
C_1s -C_2 &\le F(s)~~~\text{ for all } s\in \mathbb{R}^+,\label{A1}\\
\liminf_{s \to \infty} \frac{s^{2-\delta}F''(s)}{F'(s)}&\ge 2. \label{A2}
\end{align}
Then for arbitrary domain $\Omega$ of class $\mathcal{C}^{1}$ satisfying the uniform exterior ball condition and arbitrary prescribed boundary value $u_0 \in \mathcal{C}^{1,1}(\overline{\Omega})$ there exists a unique function $u\in \mathcal{C}^{0,1}(\overline{\Omega})$ solving~\eqref{P1}.
\end{Theorem}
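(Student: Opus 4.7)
The plan is to approximate the (possibly non-standard growth) integrand by a sequence of strictly convex integrands $F_\varepsilon$ of standard $(p,q)$-growth, for example $F_\varepsilon(s):=F(s)+\varepsilon(1+s^2)^{p/2}$ with $p$ large, so that $F_\varepsilon \searrow F$ pointwise and each $F_\varepsilon$ inherits the coercivity and strict convexity of $F$. For every $\varepsilon>0$ the direct method yields a unique minimizer $u_\varepsilon \in u_0+W^{1,p}_0(\Omega)$, solving the associated Euler--Lagrange equation weakly. The goal is then the uniform-in-$\varepsilon$ Lipschitz estimate $\|\nabla u_\varepsilon\|_{L^\infty(\Omega)}\le C$, which allows extraction of a weak-$\star$ limit $u\in\mathcal{C}^{0,1}(\overline\Omega)$ satisfying \eqref{P1}; uniqueness follows from strict convexity of $F$, and lower semicontinuity (together with a standard comparison between approximating and original functionals) identifies the limit as the unique minimizer.

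The uniform Lipschitz estimate splits classically into an interior and a boundary bound. For the interior part, differentiating the Euler--Lagrange equation in the smoothed (uniformly elliptic) setting shows that $|\nabla u_\varepsilon|$ is a subsolution of a linear, possibly degenerate, elliptic equation, so a gradient maximum principle gives
\[
\|\nabla u_\varepsilon\|_{L^\infty(\Omega)} \leq \|\nabla u_\varepsilon\|_{L^\infty(\partial\Omega)}.
\]
This reduces the whole problem to bounding $|\nabla u_\varepsilon|$ on $\partial\Omega$ uniformly in $\varepsilon$, and this is where the barriers enter.

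The heart of the argument, and its main difficulty, is the construction of suitable barriers at each boundary point. Fix $x_0\in\partial\Omega$ and, using the uniform exterior ball condition, let $B_{r_0}(y_0)$ be tangent to $\Omega$ at $x_0$. Seek a radial upper barrier of the form $w(x):=u_0(x_0)+\langle \ell,x-x_0\rangle+\psi(|x-y_0|-r_0)$, where $\ell\in\mathbb{R}^d$ is chosen (using the $\mathcal{C}^{1,1}$-regularity of $u_0$ and the exterior ball geometry) so that $w\ge u_0$ on $\partial\Omega$, and where $\psi$ is a nondecreasing, concave corrector vanishing at the origin. The supersolution condition $-\diver(a(|\nabla w|)\nabla w)\ge 0$ reduces, by the radial ansatz and the identities \eqref{af}, to a first-order ODE inequality of the form
\[
\bigl(r^{d-1}F'(\psi'(r))\bigr)'\le 0,
\]
whose solution is obtained by inverting the map $t\mapsto t^{d-1}F'(t)$. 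Producing $\psi$ with finite slope at $r_0$ and sufficiently small oscillation over the radial extent of $\Omega$ is precisely what \eqref{A2} is tailored for: the quantitative estimate $\liminf_{s\to\infty} s^{2-\delta}F''(s)/F'(s)\ge 2$ forces $t\mapsto t^{d-1}F'(t)$ to grow fast enough that its inverse is integrable at infinity, giving a genuine barrier in every regime from almost-linear to exponential growth, with the margin $\delta>0$ playing the role of the bounded slope condition of the classical theory. I expect this step to be by far the most delicate, because the construction must be tuned against the dimension $d-1$ in the radial operator, the exterior-ball radius $r_0$, and the approximation parameter $\varepsilon$, all simultaneously.

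With upper and (symmetrically) lower barriers in hand, the weak comparison principle for the strictly convex $F_\varepsilon$ yields $|u_\varepsilon(x)-u_0(x_0)|\le C|x-x_0|$ near every $x_0\in\partial\Omega$, with $C$ independent of $\varepsilon$. Substituting this boundary Lipschitz control into the interior gradient maximum principle gives the global bound, and passing to the limit $\varepsilon\to 0^+$ along a subsequence completes the proof.
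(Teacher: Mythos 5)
Your outline follows the same skeleton as the paper (approximate the functional, use a gradient maximum principle to reduce the global Lipschitz bound to a boundary gradient bound, then build barriers at each boundary point using the exterior ball condition), but the central step fails as written. The claim that the supersolution condition for $w(x)=u_0(x_0)+\langle \ell,x-x_0\rangle+\psi(|x-y_0|-r_0)$ reduces ``by the radial ansatz'' to the ODE inequality $\bigl(r^{d-1}F'(\psi'(r))\bigr)'\le 0$ is only true when $\ell=0$: once the affine tilt is added, $\nabla w=\ell+\psi'(r)\,\tfrac{x-y_0}{|x-y_0|}$ is no longer radial, $a(|\nabla w|)$ depends on the angle between $\ell$ and the radial direction, and for a nonlinear operator the sum of an affine solution and a radial supersolution need not be a supersolution. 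Controlling precisely this interaction is the technical heart of the problem; in the paper it occupies all of Lemma~\ref{L4}, with the splitting of $-\diver\bigl(a_{\lambda}(|\nabla v|)\nabla v\bigr)$ and the case distinction $a_{\lambda}'\le 0$ versus $a_{\lambda}'>0$, and it is there --- not in any integrability of the inverse of $t\mapsto t^{d-1}F'(t)$ --- that hypothesis \eqref{A2} enters: via \eqref{A2b} it gives $sF_{\lambda}''(s)/F_{\lambda}'(s)\ge s^{\delta-1}$ for large $s$, which dominates the corresponding quantity of an auxiliary integrand evaluated at the barrier slope. Note also that the paper does not build the radial profile from $F$ at all: it uses the fixed comparison integrand $\tilde{a}(s)=1/(1+s)$, giving the explicit profile $b^q(r)=q/(r^{d-1}-q)$, so that the barrier, and hence the boundary gradient bound, is independent of the approximation parameters. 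With your $F_\varepsilon$-dependent profile you would in addition have to prove uniformity in $\varepsilon$, and for nearly linear growth $F'$ is bounded, so $(F')^{-1}$ is defined only on a bounded range --- an issue your sketch does not address.

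The approximation scheme is also problematic. Adding $\varepsilon(1+s^2)^{p/2}$ to an integrand that may grow exponentially (or barely superlinearly) does not produce a standard-growth, uniformly elliptic functional, so the facts you invoke --- a minimizer in $u_0+W^{1,p}_0(\Omega)$, validity and differentiability of the Euler--Lagrange equation, the $W^{2,2}\cap W^{1,\infty}$ regularity needed to run the gradient maximum principle --- are themselves nonstandard-growth regularity statements, essentially presupposing what is to be proved; moreover the final passage $\varepsilon\to 0$ and the identification of the limit as a minimizer/solution of \eqref{P1} faces Lavrentiev-type pitfalls. The paper avoids both difficulties: it replaces $F$ above a level $\lambda$ by its quadratic Taylor polynomial and adds $\tfrac{\mu}{2}s^2$, so that classical theory applies to $u_{\lambda,\mu}$, and it never passes to the limit in $\lambda$. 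Instead it proves the Lipschitz bound with a constant $C$ independent of $\lambda$ and $\mu$ and then chooses $\lambda\ge 2C$, so that $F_{\lambda}=F$ along the solution and the approximate minimizer solves \eqref{P1} exactly; only the harmless limit $\mu\to 0_+$ within the uniformly convex quadratic setting is taken. You would need this truncation-and-exactness device, or a genuine substitute for it, together with a correct treatment of the affine-plus-radial barrier, to make your outline into a proof.
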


\begin{Rem}
In fact \eqref{A2} can be relaxed and replaced by
\begin{align}
\liminf_{s \to \infty} \frac{s^{2}F''(s)}{(\ln s)^{1+\delta}F'(s)}&\ge 2. \label{A222}
\end{align}
\end{Rem}

Let us emphasize the key novelty of the result. First, we do not require any geometrical constraint on boundary data and/or on the domain and the result is valid for all $\mathcal{C}^{1,1}$ domains and arbitrary $u_0\in \mathcal{C}^{1,1}(\overline{\Omega})$. Furthermore, we do not assume any specific growth condition on $F$ as all we need is the sufficient convexity assumption \eqref{A2}. It is worth noticing that the assumption \eqref{A2} is not only sufficient for getting global Lipschitz  solutions but also necessary for $F$ having linear growth, as  it is  shown in \cite{BBM}. Last, we are also able to cover the case of logarithmic, exponential or even oscillating growth condition for $F$, see e.g. the example given  in \eqref{example}, which also satisfies \eqref{A2}. Furthermore, we are even able to go beyond the logarithmic or exponential growth. Indeed, we define
$$
F(s)=s\eta(s) \qquad \textrm{ with } 1\le \eta(s) \overset{s\to \infty}{\to} \infty,
$$
where $\eta$ is smooth non-decreasing and fulfils for all $s>0$
$$
2\eta'(s) + s\eta''(s) >0.
$$
Then $F$ is strictly convex and satisfies \eqref{A1}. The condition \eqref{A2} is equivalent to
$$
\liminf_{s \to \infty} \frac{s^{2-\delta}(2\eta'(s) +s\eta''(s))}{\eta(s)+s\eta'(s)}\ge 2.
$$
Finally, choosing  $\eta(s)$ for example such that $\eta(s)\sim e^{e^{s}}$ as $s\to \infty$ or such that for some $\alpha>0$ we have $\eta(s)\sim \ln^{\alpha}s$ as $s\to \infty$, then \eqref{A2} remains valid. Hence, we see that even faster growth than exponential or slower growth than logarithmic are covered by our result.

The proof will be given by an ``approximation" scheme.  First, we approximate $F$ with functionals $F_{\lambda}$ that are quadratic for large values of $\lambda$. In particular, they are strictly convex and so they admit unique minimizers $u_{\lambda}$. Then, we construct lower and upper barriers to $u_{\lambda}$  using an appropriate auxiliary problem. The link with our original problem (true barrier function, see section $2.5$) is achieved with the selection of the parameter~$\lambda$ large enough in order to guarantee the upper bound. Using the barrier function, we are able to get uniform Lipschitz estimates and get the result in the limit.

\section{Proof of  Theorem~\ref{T1}}
\label{main_proof}

This section is devoted to the proof of the result of this paper. The proof will be given by an ``approximation" scheme. This means that for some $\lambda>0$ we shall approximate the original $F$ by $F_{\lambda}$, which will still fulfill \eqref{A1}--\eqref{A2}, will satisfy $F_{\lambda}(s)=F(s)$ for all $s\le \lambda$ but will be quadratic for all $s\ge \lambda$. For such chosen  $\lambda$, we find a minimizer $u_{\lambda}$ to
\begin{equation}\label{minl}
\int_{\Omega}F_{\lambda}(|\nabla u_{\lambda}|)\dx \le \int_{\Omega}F_{\lambda}(|\nabla u_0 +\nabla \varphi|)\dx
\end{equation}
and introduce also the corresponding $a_{\lambda}$ via the identity
\begin{equation}
F'_{\lambda}(s)=a_{\lambda}(s)s \qquad \text{for all } s \in \mathbb{R}^+.
\label{min2l}
\end{equation}
Then the minimizer $u_{\lambda}$ will also solve
\begin{equation}\label{P1l}
\begin{aligned}
-\diver \left(a_{\lambda}(|\nabla u_{\lambda}|)\nabla u_{\lambda} \right)&=0 &&\textrm{ in } \Omega,\\
u_\lambda&=u_0 &&\textrm{ on } \partial \Omega.
\end{aligned}
\end{equation}
Finally,  our goal will be to specify  $\lambda>0$ for which there holds
\begin{equation}
\|\nabla u_{\lambda}\|_{\infty}\le \lambda. \label{dream}
\end{equation}
Then we immediately have that (since $a_{\lambda}(s)=a(s)$ for all $s\le \lambda$)
$$
\diver \left(a(|\nabla u_{\lambda}|)\nabla u_{\lambda} \right)=\diver \left(a_{\lambda}(|\nabla u_{\lambda}|)\nabla u_{\lambda} \right)\overset{\eqref{P1l}}=0
$$
and consequently, $u_{\lambda}$ is a solution to \eqref{P1} and therefore also a minimizer to \eqref{min}. Then due to the uniqueness of the minimizer, we get the claim of Theorem~\ref{T1}.

\subsection{Approximation $F_{\lambda}$}\label{SS23}
First, we fix some $\lambda_0$ such that for all $s\ge \lambda_0$ the second derivative of $F$ exists and is positive. Note that the existence of such $\lambda_0$ is a consequence of assumption \eqref{A2}. Indeed, we can set $\lambda_0$ in such a way that
\begin{equation}
\frac{s^{2-\delta}F''(s)}{F'(s)} \ge 1 \label{A2b}
\end{equation}
for all $s\ge \lambda_0$. Then for arbitrary $\lambda\ge \lambda_0$ we define the approximative $F_{\lambda}$ as follows
\begin{equation}
F_{\lambda}(s):= \left\{\begin{aligned}
&F(s) &&\textrm{for }s\le \lambda,\\
&F(\lambda) +F'(\lambda)(s-\lambda) + \frac12 F''(\lambda)(s-\lambda)^2 &&\textrm{for } s>\lambda.
\end{aligned}
\right. \label{Dfla}
\end{equation}
Direct computation  leads to
\begin{align}
F'_{\lambda}(s)&=\left\{ \begin{aligned}
&F'(s) &&\textrm{for }s\le \lambda,\\
&F'(\lambda) + F''(\lambda)(s-\lambda) &&\textrm{for } s>\lambda.
\end{aligned}\right.\label{1Fd}\\
F''_{\lambda}(s)&=\left\{\begin{aligned}
&F''(s) &&\textrm{for }s\le \lambda,\\
&F''(\lambda) &&\textrm{for } s>\lambda.
\end{aligned} \right. \label{2Fd}
\end{align}
With such a definition, it is clear that $F_{\lambda}$ satisfies \eqref{A1}--\eqref{A2} with constants $C_1$ and $C_2$. In addition, we see that
\begin{equation}
C_3(\lambda)s^2 - C_4(\lambda) \le F_{\lambda}(s) \le C_4(\lambda)(s^2+1)\label{c3c4}
\end{equation}
and that $F_{\lambda}$ is strictly convex. Therefore by using the standard methods of calculus of variations there exists unique $u_{\lambda} \in W^{1,2}(\Omega)$ solving \eqref{minl}. Our goal is to show \eqref{dream} provided that $\lambda$ is chosen properly.

The function $F_{\lambda}$ is still strictly convex but not necessarily uniformly. Therefore, we introduce next level of approximation, namely
\begin{equation}
F_{\lambda,\mu}(s)=\frac{\mu}{2} s^2 + F_{\lambda}(s). \label{mu}
\end{equation}
This function still satisfies \eqref{c3c4} with a possibly different constants $C_3$ and $C_4$ but is uniformly convex. Therefore we can find $u_{\lambda,\mu}\in W^{1,2}(\Omega)$ that solves
\begin{equation}\label{minm}
\int_{\Omega}F_{\lambda,\mu}(|\nabla u_{\lambda,\mu}|)\dx \le \int_{\Omega}F_{\lambda,\mu}(|\nabla u_0 +\nabla \varphi|)\dx.
\end{equation}
Moreover, due to the standard maximum principle we also have
\begin{equation}
\|u_{\lambda ,\mu}\|_{\infty} \le \|u_0\|_{\infty}.\label{AE3}
\end{equation}
In addition, we see that
$$
u_{\lambda,\mu} \to u_{\lambda} \textrm{ in } W^{1,2}(\Omega)
$$
as $\mu \to 0_+$. Therefore, if we show that
\begin{equation}
\|\nabla u_{\lambda,\mu}\|_{\infty}\le \lambda, \label{muin}
\end{equation}
then \eqref{dream} follows. Thus, it remains to find some $\lambda \ge \lambda_0$ and some $\mu_0$ such that for all $\mu \in (0,\mu_0)$ the estimate \eqref{muin} holds.

\subsection{Second derivatives and maximum principle} Starting from this subsection, we omit writing subscripts in $u$ to shorten the notation, i.e., we denote $u:=u_{\lambda,\mu}$, where $u_{\lambda,\mu}$ is the unique minimizer to \eqref{minm}.

Due to the definition of $F_{\lambda,\mu}$, we see that it is uniformly convex. Therefore we can use the classical result and due to the regularity of the domain $\Omega$ and the boundary data $u_0$, we know that $u \in W^{1,\infty}(\Omega) \cap W^{2,2}(\Omega)$. In addition, defining $a_{\lambda}$ by
$$
F'_{\lambda}(s)=s a_{\lambda}(s)
$$
we see that $u$ solves
\begin{equation}\label{P1lm}
\begin{aligned}
-\mu \Delta u - \diver \left(a_{\lambda}(|\nabla u|)\nabla u \right)&=0 &&\textrm{ in } \Omega,\\
u&=u_0 &&\textrm{ on } \partial \Omega.
\end{aligned}
\end{equation}
Due to the $W^{2,2}$ regularity of the solution, we may now apply $D_k:=\partial_{x_k}$ onto the equation and multiply the result by $D_k u$ and sum over $k=1,\ldots, d$ to obtain (we use the Einstein summation convention)
\begin{align*}
0&=D_k\left(-\mu \Delta u - \diver \left(a_{\lambda}(|\nabla u|)\nabla u \right)\right)D_ku \\
%&= -D_i\left(\mu  D_{ki} u D_k u + D_k\left(a_{\lambda}(|\nabla u|)D_i u \right)D_k u\right) + \mu |\nabla^2 u|^2 +D_k\left(a_{\lambda}(|\nabla u|)\right)D_{ki} u \\
&= -\frac{\mu}{2}  \Delta |\nabla u|^2 -D_i\left(\left( a'_{\lambda}(|\nabla u|)D_k|\nabla u| D_i u +a_{\lambda}(|\nabla u|)D_{ki} u \right)D_k u\right)\\
 &\quad + \mu |\nabla^2 u|^2 +\left( a'_{\lambda}(|\nabla u|)D_k|\nabla u| D_i u +a_{\lambda}(|\nabla u|)D_{ki} u \right) D_{ki} u\\
&= -\frac{\mu}{2}  \Delta |\nabla u|^2 -\frac12 D_i\left( a'_{\lambda}(|\nabla u|)|\nabla u| D_k|\nabla u|^2 \frac{D_i u D_k u}{|\nabla u|^2}+a_{\lambda}(|\nabla u|)D_{i} |\nabla u|^2 \right)\\
&\quad + \mu |\nabla^2 u|^2 + a'_{\lambda}(|\nabla u|)|\nabla u| |\nabla |\nabla u||^2  +a_{\lambda}(|\nabla u|)|\nabla^2 u|^2.
\end{align*}
Next, using the fact that (which follows from \eqref{af}, where we replace $F$ by $F_{\lambda}$)
\begin{equation}\label{giveme}
a'_{\lambda}(s)s = (a_{\lambda}(s)s)' - a_{\lambda} = F''_{\lambda}(s) - \frac{F'_{\lambda}(s)}{s},
\end{equation}
we can rewrite the above identity as
\begin{align*}
0&= -\frac{\mu}{2}  \Delta |\nabla u|^2 + \mu |\nabla^2 u|^2 -\frac12 D_i\left(  F''_{\lambda}(|\nabla u|) D_k|\nabla u|^2 \frac{D_i u D_k u}{|\nabla u|^2} \right)\\
 &\quad-\frac12 D_i\left(\frac{F'_{\lambda}(|\nabla u|)}{(|\nabla u|)}\left(D_{i} |\nabla u|^2-  D_k|\nabla u|^2 \frac{D_i u D_k u}{|\nabla u|^2}\right) \right)\\
&\quad  +  F''_{\lambda}(|\nabla u|) |\nabla |\nabla u||^2  +\frac{F'_{\lambda}(|\nabla u|)}{(|\nabla u|)}(|\nabla^2 u|^2-|\nabla |\nabla u||^2)\\
&\ge -\frac{\mu}{2}  \Delta |\nabla u|^2 -\frac12 D_i\left(  F''_{\lambda}(|\nabla u|) D_k|\nabla u|^2 \frac{D_i u D_k u}{|\nabla u|^2} \right)\\
&\qquad-\frac12 D_i\left(\frac{F'_{\lambda}(|\nabla u|)}{(|\nabla u|)}\left(D_{i} |\nabla u|^2-  D_k|\nabla u|^2 \frac{D_i u D_k u}{|\nabla u|^2}\right) \right)\\
&=: -\frac{\mu}{2}  \Delta |\nabla u|^2 -\frac12 D_i\left(  a_{ik} D_k|\nabla u|^2  \right),
\end{align*}
where for the inequality we used the convexity of $F_{\lambda}$. Note also that due to the convexity of $F_{\lambda}$, the matrix $a_{ij}$ is positively semidefinite, i.e., for arbitrary $\xi \in \mathbb{R}^d$, there holds
\begin{equation}\label{definit}
a_{ij} \xi_i \xi_j \ge 0.
\end{equation}
Finally, if we multiply the resulting inequality by $\max\{0, |\nabla u|^2-\|\nabla u\|_{L^{\infty}(\partial \Omega)}^2\}$, and integrate by parts (note here that due to the regularity of $u$, such a procedure is rigorous) and use the fact that the boundary integral vanishes, we deduce that
$$
\int_{\Omega}|\nabla \max\{0, |\nabla u|^2-\|\nabla u\|_{L^{\infty}(\partial \Omega)}^2\}|^2 \dx =0,
$$
which consequently implies that
\begin{equation*}
\|\nabla u\|_{L^{\infty}}\le \|\nabla u\|_{L^{\infty}(\partial \Omega)}.
\end{equation*}
Finally, since $u=u_0$ on $\partial \Omega$, we can simplify the above estimate  to
\begin{equation}
\|\nabla u\|_{L^{\infty}}\le \|\nabla u_0\|_{L^{\infty}(\partial \Omega)}+ \left\|\partial_{\bn} u\right\|_{L^{\infty}(\partial \Omega)}\le C+ \left\|\partial_{\bn} u\right\|_{L^{\infty}(\partial \Omega)},\label{supersolu}
\end{equation}
where $\partial_{\bn}$ denotes the normal derivative of $u$ on $\partial \Omega$ and $C$ is a constant, which is independent of $\lambda$ and $\mu$. Hence, to prove \eqref{muin}, we need to show that
\begin{equation}
\|\partial_{\bn}u \|_{L^\infty(\partial \Omega)}\le C\label{basis2}
\end{equation}
with $C$ being independent of $\lambda$ and $\mu$. Indeed, if \eqref{basis2} holds true, then it also follows from \eqref{supersolu} that \eqref{muin} holds provided that $\lambda \ge 2C$. The rest of the paper is devoted to the proof of~\eqref{basis2}, which will be shown via the barrier function technique.

\subsection{Estimates of normal derivatives via barrier functions} \label{barrier}
Our goal is to show that for almost all $\bx \in \partial \Omega$ there holds
\begin{equation}
|\partial_{\bn} u(\bx)|\le C \label{numb}
\end{equation}
with a constat $C$ independent of $\lambda$ and $\mu$. Notice that since $u\in W^{2,2}(\Omega)$, we know that it makes sense to consider $\nabla u$ on $\partial \Omega$. Assume for a moment that for given $\bx_0 \in \partial \Omega$ we can find $u^b$ and $u_b$ such that
$$
u_b(\bx_0)=u^b(\bx_0)=u_0(\bx_0)
$$
and fulfilling for all $\bx\in B_{r}(\bx_0)\cap \Omega$ with some $r>0$
\begin{equation}\label{truee}
u_b(\bx)\le u(\bx)\le u^b(\bx).
\end{equation}
Then we have
\begin{align*}
\partial_{\bn} u(\bx_0)&\le \limsup_{\{\bx \in \Omega; \, \bx \to \bx_0\}}\frac{u(\bx)-u(\bx_0)}{|\bx - \bx_0|}\\
&\le  \limsup_{\{\bx \in \Omega; \, \bx \to \bx_0\}}\frac{u^b(\bx)-u(\bx_0)}{|\bx - \bx_0|}+ \limsup_{\{\bx \in \Omega; \, \bx \to \bx_0\}}\frac{u(\bx)-u^b(\bx)}{|\bx - \bx_0|}\\
&\le |\nabla u^b(\bx_0)|\\
\partial_{\bn} u(\bx_0)&\ge  \liminf_{\{\bx \in \Omega; \, \bx \to \bx_0\}}\frac{u(\bx)-u(\bx_0)}{|\bx - \bx_0|}\\
&\ge   \liminf_{\{\bx \in \Omega; \, \bx \to \bx_0\}}\frac{u_b(\bx)-u(\bx_0)}{|\bx - \bx_0|}+ \liminf_{\{\bx \in \Omega; \, \bx \to \bx_0\}}\frac{u(\bx)-u_b(\bx)}{|\bx - \bx_0|}\\
&\ge -|\nabla u_b(\bx_0)|.
\end{align*}
Consequently
\begin{equation}\label{wened}
|\partial_{\bn} u(x_0)|\le \max\{|\nabla u_b(\bx_0)|, |\nabla u^b(\bx_0)|\}.
\end{equation}
Thus, we can reduce everything just on finding proper \emph{barriers} $u_b$ and $u^b$ for which we can control derivatives independently of the choice of $\bx_0$. This will be however done by looking for sub- and super-solutions to an original problem. Hence if we succeed in finding $u_b$ and $u^b$ such that
\begin{equation}\label{ubsub}
\begin{aligned}
-\mu \Delta u^b - \diver \left(a_{\lambda}(|\nabla u^b|)\nabla u^b \right)&\ge 0 &&\textrm{ in } \Omega\cap B_r(\bx_0),\\
u^b&\ge u  &&\textrm{ on } \partial (\Omega\cap B_r(\bx_0)),
\end{aligned}
\end{equation}
\begin{equation}\label{ubsup}
\begin{aligned}
-\mu \Delta u_b - \diver \left(a_{\lambda}(|\nabla u_b|)\nabla u_b \right)&\le 0 &&\textrm{ in } \Omega\cap B_r(\bx_0),\\
u_b&\le u &&\textrm{ on } \partial (\Omega\cap B_r(\bx_0))
\end{aligned}
\end{equation}
and satisfying $u_b(\bx_0)=u^b(\bx_0)=u_0(\bx_0)$ then due to the convexity of $F_{\lambda}$ we can deduce \eqref{truee}. Thus, it just remains to construct solutions to \eqref{ubsub} and \eqref{ubsup} (and consequently to fix $r>0$) for which we are able to control gradients independently of $\bx_0$, $\lambda$ and $\mu$. Since the procedure of finding barriers $u_b$ and $u^b$ is in fact the same, we focus in what follows only on finding a function fulfilling~\eqref{ubsub}.

\subsection{Prototype barrier function}
The prototype barrier function will be found as a solution to a special problem. We shall define
\begin{equation}\label{ag}
\frac{\F'(s)}{s}= \a(s) := \frac{1}{1+s}>0 \qquad \text{for}~s>0.
\end{equation}
and we can find the corresponding $\F(s):=\int_0^s \F'(t) \dt$, which is convex. In addition, we have for all $s>0$,
\begin{equation}\label{propag}
\a'(s) := -\frac{1}{(1+s)^2}<0 \qquad \text{and} \qquad s \frac{\a'(s)}{a(s)} = -\frac{s}{1+s}<0.
\end{equation}
The minimization problem for $\F$ then serve as a kind of comparison problem to the minimization of $F_{\lambda,\mu}$. Using the definition of $\a$, we see that  $\F'$ is a strictly monotonically increasing mapping from $[0,\infty)$ to $[0,1)$ with continuous inverse. With the help of~$\F$ we now define our prototype barrier function.

For arbitrary $r_0>0$ and $q\in (0, r_0^{d-1})$, we set
\begin{equation}\label{DFb}
\ber(r)\coloneqq (\F')^{-1} \left(\frac{q}{r^{d-1}}\right)=\frac{q}{r^{d-1}-q}.
\end{equation}
It can be easily seen that $\ber \in \mathcal{C}^1[0,\infty)$ is a non-negative decreasing function. Finally, for all $x\in \mathbb{R}^d\setminus B_{r_0}(\b0)$, $r_0>0$, we define
\begin{equation}
\omer(\bx)\coloneqq \int_{r_0}^{|\bx|} \ber(r)\dr. \label{dfomer}
\end{equation}

By construction,~$\omer$ is a minimizer of the functional with integrand $\F$ and equivalently a solution to the associated Dirichlet problem on the set $\mathbb{R}^d \setminus \overline{B_{r_{0}}(\b0)}$, but moreover, it also turns out to be super-harmonic. To summarize, we have the following result.
\begin{Lemma}\label{L3}
For every $r_0>0$ and $q\in (0,r_0^{d-1})$ the function~$\omer$ defined in~\eqref{dfomer} satisfies
\begin{equation}\label{P1v}
\begin{aligned}
-\diver \left( \a(|\nabla \omer|)\nabla \omer \right) &=0 &&\textrm{ in } \mathbb{R}^d \setminus \overline{B_{r_{0}}(\b0)},\\
\omer &=0 &&\textrm{ on } \partial B_{r_{0}}(\b0).
\end{aligned}
\end{equation}
Furthermore, there holds
\begin{equation}\label{subhar}
-\Delta \omer (\bx) \ge 0 \qquad \textrm{for all } \bx \in \mathbb{R}^d \setminus \overline{B_{r_{0}}(\b0)}.
\end{equation}
\end{Lemma}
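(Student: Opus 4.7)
The proof is essentially a direct calculation, exploiting heavily the fact that $\omer$ is a radial function and that $\ber$ was defined by inverting $\F'$ against the function $q/r^{d-1}$, which is tailored precisely to make the first equation trivial.

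First I would record that $\omer(\bx) = g(|\bx|)$ where $g(r) = \int_{r_0}^r \ber(s)\,ds$. Immediately $\omer = 0$ on $\partial B_{r_0}(\b0)$. Since $\ber \ge 0$ on $(r_0, \infty)$, the chain rule gives
$$\nabla \omer(\bx) = \ber(|\bx|)\,\frac{\bx}{|\bx|}, \qquad |\nabla \omer(\bx)| = \ber(|\bx|).$$

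For the first claim~\eqref{P1v}, I would compute the flux field using the identity $\a(s)s = \F'(s)$ and the very definition~\eqref{DFb} of $\ber$:
$$\a(|\nabla\omer|)\,\nabla\omer = \a(\ber(r))\,\ber(r)\,\frac{\bx}{r} = \F'(\ber(r))\,\frac{\bx}{r} = \frac{q}{r^{d-1}}\cdot\frac{\bx}{r} = q\,\frac{\bx}{|\bx|^d},$$
with $r = |\bx|$. Then a direct calculation shows $\diver\bigl(\bx/|\bx|^d\bigr) = 0$ on $\mathbb{R}^d \setminus \{\b0\}$ (this is the Newtonian kernel), which proves \eqref{P1v}.

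For the superharmonicity~\eqref{subhar}, I would use the formula $\Delta g(|\bx|) = g''(r) + \frac{d-1}{r}g'(r)$. Since $g'(r) = \ber(r) = q/(r^{d-1}-q)$, differentiation gives $g''(r) = -q(d-1)r^{d-2}/(r^{d-1}-q)^2$. Substituting and simplifying over the common denominator,
\begin{align*}
\Delta \omer(\bx) &= \frac{(d-1)q}{r^{d-1}-q}\left[\frac{1}{r}-\frac{r^{d-2}}{r^{d-1}-q}\right] = \frac{(d-1)q}{r^{d-1}-q}\cdot\frac{-q}{r(r^{d-1}-q)} = -\frac{(d-1)\,q^2}{r\,(r^{d-1}-q)^2}.
\end{align*}
Because $q \in (0, r_0^{d-1})$ and $r \ge r_0$, the denominator $r^{d-1}-q$ is strictly positive on $\mathbb{R}^d \setminus \overline{B_{r_0}(\b0)}$, so $\Delta\omer \le 0$, which is \eqref{subhar}.

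There is no real obstacle here — the lemma is a verification, and the only thing to check is that the cancellation in the radial Laplacian has the expected sign, which it does because $\ber$ is decreasing in $r$ (equivalently $\a'<0$, as noted in~\eqref{propag}); this is precisely the reason the zeroth-order behaviour $(d-1)\ber(r)/r$ is dominated by the second-order contribution $\ber'(r)$.
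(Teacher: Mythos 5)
Your proof is correct. For the equation \eqref{P1v} you argue exactly as the paper does: the flux $\a(|\nabla\omer|)\nabla\omer = \F'(\ber(|\bx|))\,\bx/|\bx| = q\,\bx/|\bx|^d$ is the divergence-free Newtonian field, and the boundary condition is immediate from the definition of $\omer$. For the superharmonicity \eqref{subhar} you take a slightly different route: you compute $\Delta\omer$ explicitly via the radial Laplacian, using $\ber(r)=q/(r^{d-1}-q)$ and $(\ber)'(r)=-q(d-1)r^{d-2}/(r^{d-1}-q)^2$, arriving at the exact value $\Delta\omer=-(d-1)q^2/\bigl(r(r^{d-1}-q)^2\bigr)<0$ (valid since $q<r_0^{d-1}\le r^{d-1}$), which is a correct and even quantitative verification. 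The paper instead avoids this computation: it expands the already-established identity $0=\diver\left(\a(|\nabla\omer|)\nabla\omer\right)=\a(\ber)\Delta\omer+\a'(\ber)\ber(\ber)'$ and reads off the sign of $\Delta\omer$ from $\a>0$, $\ber>0$, $\a'<0$ and $(\ber)'\le 0$ — which is in fact the structural reason you gesture at in your closing remark (your phrase ``zeroth-order behaviour'' for the first-order term $(d-1)\ber(r)/r$ is loose, but nothing in your argument depends on that heuristic). Both routes are fine; the paper's is marginally more robust since it never needs the explicit form of $\ber$, only its monotonicity and the sign of $\a'$, whereas yours yields the sharper explicit formula for $\Delta\omer$.
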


\begin{proof}
Using the definition of~$\omer$, we immediately see that~$\omer$ vanishes on $\partial B_{r_{0}}(\b0)$, and we further observe
\begin{equation}
\label{omerp}
\nabla \omer(\bx) = \ber(|\bx|)\frac{\bx}{|\bx|} \quad \text{and} \quad |\nabla \omer(\bx)| = \ber(|\bx|).
\end{equation}
Via the definition of $\ber$, we thus have
\begin{equation*}
\F'(|\nabla \omer(\bx)|) = \F'(\ber(|\bx|))= \frac{q}{|\bx|^{d-1}}.
\end{equation*}
Consequently, for all $|\bx|>r_0$ there holds
\begin{equation}
\begin{split}
\diver \left(\a(|\nabla \omer(\bx)|)\nabla \omer (\bx)\right)&=\diver\Big(\F'(|\nabla \omer(\bx)|)\frac{\nabla \omer(\bx)}{|\nabla \omer (\bx)|}\Big) \\
  &=q \diver \frac{\bx }{|\bx|^{d}}=0
\end{split}
\end{equation}
and the solution property~\eqref{P1v} follows. Finally, we check the super-harmonicity property of~$\omer$. In view of~\eqref{P1v} and~\eqref{omerp} we get
$$
\begin{aligned}
0&=\diver \left( \a(|\nabla \omer(\bx)|)\nabla \omer(\bx) \right)\\
&=\a(|\nabla \omer(\bx)|)\Delta \omer(\bx) + \nabla \a(|\nabla \omer(\bx)|)\cdot \nabla \omer(\bx)\\
&=\a(\ber(|\bx|))\Delta \omer(\bx) + \a'(\ber(|\bx|))\ber(|\bx|)(\ber)'(|\bx|).
\end{aligned}
$$
Therefore, since the functions~$\a$ and~$\ber$ are positive and~$\ber$ is monotonically decreasing, also the second claim~\eqref{subhar} follows.
\end{proof}

Thus,~$\omer$ is a good prototype super-solution to the approximative problem on a certain set. However, due to the possibly non-constant prescribed boundary values~$u_0$, it must be corrected, which will be done in the next step.

\subsection{True barrier function}
Here, we correct~$\omer$ via an affine function such that it will finally give us the desired super-solution property to approximative problem. For this purpose, let $\bk\in \mathbb{R}^d$, $c\in \mathbb{R}$, $r_0>0$ and $q\in (0, r_0^{d-1})$ be arbitrary. For all $\bx \in \mathbb{R}^d \setminus B_{r_0}(\b0)$, we define
\begin{equation}\label{w2.2}
\ver(\bx)\coloneqq \omer(\bx) +\bk \cdot \bx+c.
\end{equation}
The key properties of the function~$\ver$ are formulated in the following lemma.

\begin{Lemma}\label{L4}
For every $K>0$ there exists a constant $M>0$ depending only on~$F$, $\lambda_0$  and~$K$ such that for all $\bk \in B_K(\b0)$, all $c\in \mathbb{R}$, all $r_0>0$, all $\lambda\ge \lambda_0$ and all $q\in (0, r_0^{d-1})$, the function~$\ver$ defined in~\eqref{w2.2} satisfies the inequalities
\begin{equation}
\label{super}
\begin{split}
-\diver \big(a_{\lambda}(|\nabla \ver(\bx)|)\nabla \ver (\bx) \big) &\ge 0,\\
-\Delta \ver(\bx)&\ge 0
\end{split}
\end{equation}
for all $\bx \in \mathbb{R}^d \setminus B_{r_0}(\b0)$ fulfilling $\ber(|\bx|)\ge M$ with $\ber$ given by~\eqref{DFb}.
\end{Lemma}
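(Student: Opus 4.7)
The second inequality $-\Delta \ver \ge 0$ is immediate: since $\ver$ differs from $\omer$ by an affine function, $\Delta \ver = \Delta \omer$, and the super-harmonicity \eqref{subhar} from Lemma~\ref{L3} applies throughout $\mathbb{R}^{d}\setminus B_{r_{0}}(\b0)$ irrespective of $M$. For the first inequality, I would rewrite the quasilinear divergence in non-divergence form as
\begin{equation*}
\diver\!\bigl(a_{\lambda}(|\nabla \ver|)\nabla \ver\bigr) \;=\; a_{\lambda}(s)\,\Delta\omer \;+\; \bigl(F_{\lambda}''(s)-a_{\lambda}(s)\bigr)\,\hat n^{T}\nabla^{2}\omer\,\hat n,
\end{equation*}
with $s := |\nabla \ver|$ and $\hat n := \nabla \ver / s$, using $\nabla^{2}\ver = \nabla^{2}\omer$ together with the identity $s a_{\lambda}'(s) = F_{\lambda}''(s) - a_{\lambda}(s)$ from \eqref{af}.

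Next I would exploit the radial structure of $\omer$. With $r:=|\bx|$ and $e_{r}:=\bx/r$, one has $\nabla \omer = \ber(r)\,e_{r}$, while $\nabla^{2}\omer$ has eigenvalue $(\ber)'(r)$ along $e_{r}$ and eigenvalue $\ber(r)/r$ on $e_{r}^{\perp}$. The key algebraic fact, which is a short computation from \eqref{DFb} (or, equivalently, from Lemma~\ref{L3} combined with \eqref{ag}), is
\begin{equation*}
(\ber)'(r) \;=\; -(d-1)\bigl(1+\ber(r)\bigr)\,\frac{\ber(r)}{r}.
\end{equation*}
Splitting $\bk$ into its radial component $\xi := \bk\cdot e_{r}$ and tangential part $\bk_{T}$, and setting $\cos^{2}\theta := (\ber+\xi)^{2}/s^{2}$, $\sin^{2}\theta := |\bk_{T}|^{2}/s^{2}$, substitution into the previous display factors the positive quantity $\ber/r$ out, yielding
\begin{equation*}
\tfrac{r}{\ber}\,\diver\!\bigl(a_{\lambda}(|\nabla \ver|)\nabla \ver\bigr) \;=\; a_{\lambda}(s)\bigl[(d-2)+\cos^{2}\theta - (d-1)(1+\ber)\sin^{2}\theta\bigr] + F_{\lambda}''(s)\bigl[\sin^{2}\theta - (d-1)(1+\ber)\cos^{2}\theta\bigr].
\end{equation*}

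The remaining task is to choose $M$ so that the right-hand side is non-positive whenever $\ber \ge M$. The constraint $|\bk|\le K$ together with $M\ge 4K$ forces $s \ge \ber - K \ge \ber/2$, whence $\sin^{2}\theta \le 4K^{2}/\ber^{2}$, $\cos^{2}\theta \ge 3/4$, and $(1+\ber)\sin^{2}\theta \le 8K^{2}/\ber$. Consequently the first bracket is uniformly bounded in terms of $d$ and $K$, while the second bracket is at most $-\tfrac{1}{2}(d-1)\ber$ for $\ber$ sufficiently large. The inequality therefore reduces to $\ber\, F_{\lambda}''(s) \gtrsim a_{\lambda}(s) = F_{\lambda}'(s)/s$, i.e.\ to $s^{2}F_{\lambda}''(s)/F_{\lambda}'(s)$ exceeding a constant depending only on $d$ and $K$ (using $s\sim\ber$). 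This is exactly the content of \eqref{A2b} in the regime $s\le \lambda$; in the quadratic regime $s>\lambda$, a direct check using \eqref{1Fd}--\eqref{2Fd} shows $F_{\lambda}''(s)s^{2}/F_{\lambda}'(s)$ grows linearly in $s$ and is bounded below at the transition by $F''(\lambda)\lambda^{2}/F'(\lambda) \ge \lambda_{0}^{\delta}$. The main obstacle is precisely this bookkeeping and verifying that the threshold $M$ depends only on $F$, $\lambda_{0}$, $K$: independence of $r_{0}$ and $q$ is automatic from the factorization through $\ber/r$, and independence of $\lambda$ follows because the quadratic extension only improves the convexity ratio beyond $\lambda$.
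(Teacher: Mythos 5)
Your argument is correct in substance, and it reorganizes the computation differently from the paper. The paper stays in divergence-form language: it splits $L=-\diver(a_{\lambda}(|\nabla \ver|)\nabla\ver)$ into the two pieces of \eqref{L}, uses that $\omer$ solves the auxiliary equation with $\a(s)=1/(1+s)$ to express one piece through the ratio $b\,\F''(b)/\F'(b)$, and then argues by cases according to the sign of $a_{\lambda}'(|\nabla\ver|)$: when $a_{\lambda}'\le 0$ it compares the two logarithmic-convexity ratios via \eqref{A2b} and \eqref{ag}, and when $a_{\lambda}'>0$ it absorbs the bad term using $-b/b'\le r$ and \eqref{dist}. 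You instead pass to non-divergence form, diagonalize $\nabla^2\omer$ in the radial frame, and use the single identity $(\ber)'(r)=-(d-1)(1+\ber)\ber/r$ (the same identity as \eqref{Er1} in the paper) to reduce everything, without a case split, to the one condition $s^{2}F_{\lambda}''(s)/F_{\lambda}'(s)\ge C(d,K)$ for $s\sim\ber\ge M$; I checked your bracket identity $(d-1)\cos^{2}\theta-\sin^{2}\theta=(d-2)+\cos^{2}\theta-(d-1)\sin^{2}\theta$ and the factorization through $\ber/r$, and they are right. What your route buys is a cleaner, unified estimate that never mentions $\F$ at the estimation stage and makes transparent why only the lower bound \eqref{A2b} on the convexity ratio matters; what the paper's route buys is that it is written entirely at the level of the quantities already computed for \eqref{L}, which is the same structure reused in \cite{BBM}.

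One step you should tighten is the quadratic regime $s>\lambda$. Your claim that $g(s):=s^{2}F_{\lambda}''(s)/F_{\lambda}'(s)$ ``grows linearly and is bounded below at the transition'' is not literally a proof: from \eqref{1Fd}--\eqref{2Fd}, $g(s)=F''(\lambda)s^{2}/\bigl(F'(\lambda)+F''(\lambda)(s-\lambda)\bigr)$ is \emph{not} monotone on $(\lambda,\infty)$ when $\lambda F''(\lambda)>2F'(\lambda)$ (it dips below its transition value $\lambda^{2}F''(\lambda)/F'(\lambda)$), and likewise the extension need not ``improve the convexity ratio.'' The conclusion you need is nevertheless true and uniform in $\lambda\ge\lambda_{0}$: using \eqref{A2b} at $s=\lambda$, i.e.\ $F'(\lambda)\le\lambda^{2-\delta}F''(\lambda)$, one gets for $s>\lambda$
\begin{equation*}
\frac{s^{2}F_{\lambda}''(s)}{F_{\lambda}'(s)}\;\ge\;\frac{s^{2}}{\lambda^{2-\delta}+s-\lambda}\;\ge\;\frac{s^{2}}{s^{2-\delta}+s}\;\ge\;\tfrac12\min\{s^{\delta},s\},
\end{equation*}
so the required constant is exceeded once $s\ge S_{0}$ with $S_{0}$ depending only on $\delta$, $\lambda_{0}$, $d$, $K$, and then $M\ge 2S_{0}$ (together with your conditions $M\ge 4K$ etc.) gives the threshold with the dependence claimed in the lemma. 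With this repair your proof is complete.
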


\begin{proof}
First, it obviously follows from \eqref{w2.2} that $\Delta \ver =\Delta \omer$ in $\mathbb{R}^d \setminus B_{r_0}(\b0)$. Hence, the second inequality in \eqref{super} is a consequence of \eqref{subhar}.

Therefore, it remains to check the first inequality in \eqref{super}. To do so, we first note that for all $\bx\in\mathbb{R}^{d}\setminus B_{r_0}(\b0)$
\begin{equation}
\nabla \ver(\bx) = \nabla \omer(\bx)+ \bk=\ber(|\bx|)\frac{\bx}{|\bx|} + \bk. \label{nabv}
\end{equation}
Consequently, a direct computation leads to
\begin{equation*}
\begin{split}
|\nabla \ver(\bx)|^2 &= (\ber)^2(|\bx|) + |\bk|^2 + 2 \ber(|\bx|) \frac{\bk \cdot \bx}{|\bx|},\\
\nabla |\nabla \ver(\bx)|&=\frac{\ber (|\bx|)(\ber)'(|\bx|)\frac{\bx}{|\bx|}+(\ber)'(|\bx|) \frac{\bx}{|\bx|}\frac{\bk \cdot \bx}{|\bx|}+\ber(|\bx|)\big( \frac{\bk}{|\bx|}-\frac{(\bk \cdot \bx) \bx}{|\bx|^3}\big) }{|\nabla \ver(\bx)|}.
\end{split}
\end{equation*}
Hence, using these identities, we obtain the following auxiliary results that will be used later
\begin{align}\label{ll1}
\nabla |\nabla \ver(\bx)| \cdot \frac{\bx}{|\bx|}&=(\ber)'(|\bx|)\frac{\ber(|\bx|)  +  \frac{\bk \cdot \bx}{|\bx|}}{|\nabla \ver(\bx)|}
\end{align}
and
\begin{equation}
\begin{split}\label{ll2}
\nabla |\nabla \ver(\bx)| \cdot \bk &=\frac{\ber(|\bx|) (\ber)'(|\bx|)\frac{\bx \cdot \bk}{|\bx|} +(\ber)'(|\bx|) \frac{(\bk \cdot \bx)^2}{|\bx|^2} + \ber(|\bx|) \big(\frac{|\bk|^2}{|\bx|}- \frac{(\bk \cdot \bx)^2}{|\bx|^3}\big)}{|\nabla \ver(\bx)|}.
\end{split}
\end{equation}

With the help of the above identities, we evaluate the left hand side of \eqref{super}. We introduce the abbreviation
\begin{align*}
L(\bx) & \coloneqq -\diver \big( a_{\lambda}(|\nabla \ver(\bx)|)\nabla \ver(\bx) \big) \\
  & = - \nabla a_{\lambda}(|\nabla \ver(\bx)|) \cdot \nabla \ver(\bx) - a_{\lambda}(|\nabla \ver(\bx)|) \diver \big( \nabla \ver(\bx) \big)\\
   &\eqqcolon L_1(\bx) + L_2(\bx).
\end{align*}
Employing~\eqref{nabv},~\eqref{ll1} and~\eqref{ll2}, we first calculate
\begin{align*}
L_1(\bx) & = - a'_{\lambda}(|\nabla \ver(\bx)|) \nabla |\nabla \ver(\bx)| \cdot \nabla \ver(\bx) \\
  & = - a'_{\lambda}(|\nabla \ver(\bx)|) \nabla |\nabla \ver(\bx)| \cdot \Big[ \ber(|\bx|) \frac{\bx}{|\bx|} + \bk \Big]\\
  & = - \frac{a'_{\lambda}(|\nabla \ver(\bx)|)}{|\nabla \ver(\bx)|} \bigg[ \ber(|\bx|)(\ber)'(|\bx|) \Big(\ber(|\bx|)  +  \frac{\bk \cdot \bx}{|\bx|} \Big) \\
  & \qquad + \ber(|\bx|) (\ber)'(|\bx|)\frac{\bx \cdot \bk}{|\bx|} +(\ber)'(|\bx|) \frac{(\bk \cdot \bx)^2}{|\bx|^2} + \ber(|\bx|) \Big(\frac{|\bk|^2}{|\bx|}- \frac{(\bk \cdot \bx)^2}{|\bx|^3}\Big) \bigg]\\
  & = \frac{a'_{\lambda}(|\nabla \ver(\bx)|)}{|\nabla \ver(\bx)|} (\ber)'(|\bx|) \Big(|\bx|-\frac{\ber(|\bx|)}{(\ber)'(|\bx|)}\Big) \Big(\frac{|\bk|^2}{|\bx|}- \frac{(\bk \cdot \bx)^2}{|\bx|^3}\Big)\\
 &\quad-a'_{\lambda}(|\nabla \ver(\bx)|)(\ber)'(|\bx|)|\nabla \ver(\bx)|.
\end{align*}
Next, taking into account once again~\eqref{nabv}, the relation~\eqref{omerp} and the fact that $\omer$ solves equation~\eqref{P1v}, we find
\begin{align*}
L_2(\bx)& = - a_{\lambda}(|\nabla \ver(\bx)|)  \diver \bigg(\frac{\a(|\nabla \omer(\bx)|)\nabla \omer(\bx)}{\a(|\nabla \omer(\bx)|)}\bigg)\\
 & = \frac{a_{\lambda}(|\nabla \ver(\bx)|)\a'(|\nabla \omer(\bx)|)}{\a(|\nabla \omer(\bx)|)} \nabla |\nabla \omer(\bx)| \cdot \nabla \omer(\bx)\\
 & = \frac{a_{\lambda}(|\nabla \ver(\bx)|)\a'(\ber(|\bx|))}{\a(\ber(|\bx|))} (\ber)'(|\bx|) \ber(|\bx|).
\end{align*}
In conclusion, after a simple algebraic manipulation, we have
\begin{equation}\label{L}
\begin{aligned}
 L(\bx) & = \frac{a'_{\lambda}(|\nabla \ver(\bx)|)}{|\nabla \ver(\bx)|} (\ber)'(|\bx|) \Big(|\bx|-\frac{\ber(|\bx|)}{(\ber)'(|\bx|)}\Big) \Big(\frac{|\bk|^2}{|\bx|}- \frac{(\bk \cdot \bx)^2}{|\bx|^3}\Big)\\
  & - a_{\lambda}(|\nabla \ver(\bx)|) (\ber)'(|\bx|) \bigg(\frac{a'_{\lambda}(|\nabla \ver(\bx)|)|\nabla \ver(\bx)|}{a_\lambda(|\nabla \ver(\bx)|)}-\frac{\a'(\ber(|\bx|))}{\a(\ber(|\bx|))}  \ber(|\bx|)\bigg).
\end{aligned}
\end{equation}

We now focus on estimating the resulting term and show its nonnegativity.  To this end, we first relate~$\ber(|\bx|)$ and~$|\nabla \ver (\bx)|$ and provide some basic estimates for sufficiently large values of~$\ber(|\bx|)$. Since $|\bk|\le K$, we deduce from~\eqref{nabv} that for $M_1 \coloneqq 2 K >0$ there holds
\begin{equation}
\ber(|\bx|)\ge M_1 \implies \ber(|\bx|)\le 2|\nabla \ver (\bx)|\le 4\ber(|\bx|).
\label{dist}
\end{equation}

>From now on, we drop the indices, since they do not vary at this point and only make the calculation look complicated. Also, we will not write the $\bx$-dependence of $v$ and $b$ explicitly. Therefore, the formula \eqref{L} reduces to
\begin{equation}\label{Ls}
\begin{aligned}
 L(\bx) & = \frac{a'_{\lambda}(|\nabla v|)}{|\nabla v|} b' \Big(|\bx|-\frac{b}{b'}\Big) \Big(\frac{|\bk|^2}{|\bx|}- \frac{(\bk \cdot \bx)^2}{|\bx|^3}\Big)\\
 &  - a_{\lambda}(|\nabla v|) b' \bigg(|\nabla v| \frac{a'_{\lambda}(|\nabla v|)}{a_{\lambda}(|\nabla v|)}-b \frac{\a'(b)}{\a(b)}  \bigg) \eqqcolon \tilde{L}_1(\bx) +  \tilde{L}_2(\bx).
\end{aligned}
\end{equation}
In the study of the sign of $L(\bx)$, we distinguish two cases - either $a'_\lambda(|\nabla v|)\le 0$, or $a'_\lambda(|\nabla v|)> 0$.

\textbf{Case $a'_{\lambda}(|\nabla v|)\le 0$}. We first focus on the term $\tilde{L}_1$. We note that $b$ is positive decreasing and therefore $-b'$ is non-negative. Also, by the use of the Cauchy-Schwarz inequality,
\begin{equation}\label{kx}
0 \le |\bk|^2 \left(1 - \frac{(\bk \cdot \bx)^2}{|\bk|^2|\bx|^2} \right) \le |\bk|^2 \le K^2
\end{equation}
and we see that both expressions in brackets in $\tilde{L}_1$ are non-negative. Therefore, $\tilde{L}_1(\bx)\ge 0$.

Concerning the sign of~$\tilde{L}_2(\bx)$,   $a_{\lambda}$ is non-negative and $b'$ is non-positive. Therefore, we need to focus on the large bracket. First we rewrite it in terms of $F$ instead of $a$ using~\eqref{af},
\begin{equation}\label{L22}
\frac{a'_{\lambda}(|\nabla v|)|\nabla v|}{a_{\lambda}(|\nabla v|)}-\frac{\a'(b)}{\a(b)}  b
= |\nabla v| \frac{F''_{\lambda}(|\nabla v|)}{F'_{\lambda}(|\nabla v|)} - b \frac{\F''(b)}{\F'(b)}.
\end{equation}
Then the value of~\eqref{L22} is non-negative for sufficiently large values of $b$ (that means, comparable with $|\nabla v|$). More precisely, we use \eqref{A2b} to get
$$
|\nabla v| \frac{F_\lambda''(|\nabla v|)}{F_\lambda'(|\nabla v|)} \ge \frac{1}{|\nabla v|^{1-\delta}}\ge \frac{1}{2^{1-\delta} b^{1-\delta}}
$$
for $|\nabla v|\ge \lambda_0$ and $b\ge M_1$. It follows from \eqref{dist} that we just require that $b\ge \max\{M_1,2\lambda_0\}$. Next,  using the definition of $\F'$ (see \eqref{ag}) we have
$$
b \frac{\F''(b)}{\F'(b)} = \frac{1}{b+1}\le \frac{1}{b}.
$$
Consequently, we see that
$$
|\nabla v| \frac{F''_{\lambda}(|\nabla v|)}{F'_{\lambda}(|\nabla v|)} - b \frac{\F''(b)}{\F'(b)}\ge \frac{1}{b} (2^{\delta-1}b^{\delta}-1).
$$
Therefore, if we set
\begin{equation}
\label{defMa}
M_2:=\max\{2\lambda_0, M_1, 2^{\frac{1-\delta}{\delta}}\},
\end{equation}
and consider that $b\ge M_2$, we obtain nonnegativity of $L$.

\textbf{Case $a'_\lambda(|\nabla v|)> 0$}. Using the discussion above, we realize that now $\tilde{L}_1(\bx)\le 0$. However, not everything is lost, as we will shortly see that the term $\tilde{L}_2(\bx)$ can in this case dominate in such way that $L(\bx)$ will finally be non-negative. To prove that, we need to dive into the study of $L(\bx)$ a bit deeper.

Let us first split the term $\tilde{L}_2(\bx)$ further into two parts $l_2(\bx)$ and $l_3(\bx)$, so that
\begin{equation*}
\tilde{L}_2(\bx) = - a'_\lambda(|\nabla v|) b' |\nabla v| + a_\lambda(|\nabla v|) b' b \frac{\a'(b)}{\a(b)} \eqqcolon l_2(\bx) + l_3(\bx).
\end{equation*}
Due to the investigation provided above ($a_\lambda$ is positive, $b'$ is non-positive and \eqref{propag} holds) we immediately see that $l_2(\bx)\ge 0$ and  $l_3(\bx)\ge 0$. We will remember the latter and use the $l_2(\bx)$ in combination with $\tilde{L}_1(\bx)$ to get
\begin{equation}\label{Er3}
L=\tilde{L}_1 + \tilde{L}_2 \ge l_2 + \tilde{L}_1= - \frac{a_\lambda'(|\nabla v|)}{|\nabla v|} b' \Bigg(|\nabla v|^2 - \Big(|\bx|-\frac{b}{b'}\Big) \Big(\frac{|\bk|^2}{|\bx|}- \frac{(\bk \cdot \bx)^2}{|\bx|^3}\Big) \Bigg)
\end{equation}
and this sum is non-negative provided so is the expression in the large bracket. With the use of the definition of $b$, see \eqref{DFb} and the identity \eqref{ag}, we see that
$$
\frac{b(r)}{1+b(r)}=\frac{q}{r^{d-1}}.
$$
Hence applying derivative with respect to $r$, we obtain
$$
\frac{b'}{(1+b)^2} = -(d-1) \frac{q}{r^d} = -\frac{d-1}{r} \frac{q}{r^{d-1}}=-\frac{d-1}{r}\frac{b}{1+b}.
$$
Thus, after a simple algebraic manipulation, we get
\begin{equation}\label{Er1}
-\frac{b}{b'}=\frac{r}{(d-1)(1+b)}\le r.
\end{equation}
Finally, this estimate in \eqref{Er3}, we see that
$$
|\nabla v|^2 - \Big(|\bx|-\frac{b}{b'}\Big) \Big(\frac{|\bk|^2}{|\bx|}- \frac{(\bk \cdot \bx)^2}{|\bx|^3}\Big) \ge |\nabla v|^2 - 2|\bk|^2\ge |\nabla \bv|^2 - 2K^2.
$$
Thus, if $b \ge \max\{M_1, 4K\}$ then it follows from \eqref{dist} that
$$
|\nabla v|^2 - \Big(|\bx|-\frac{b}{b'}\Big) \Big(\frac{|\bk|^2}{|\bx|}- \frac{(\bk \cdot \bx)^2}{|\bx|^3}\Big) \ge \frac{b^2}{4} - 2K^2\ge 2K^2\ge 0
$$
and therefore going back to \eqref{Er3}, we see that $L\ge 0$. Hence, if we set
\begin{equation}
M:=\max\{M_1, M_2, 2K\},\label{defM}
\end{equation}
then the first inequality in \eqref{super} holds true  for all $\bx \in \mathbb{R}^d \setminus B_{r_0}(\b0)$ with $\ber(|\bx|)\ge M$, and the proof of the lemma is complete.
\end{proof}

\subsection{Completion of the proof}

This part of the proof is very similar to~\cite[Section~4]{BBM}  but for readers convenience, we  describe it also here, since in our setting the computations can be done easier.

Once the true barrier function from Lemma~\ref{L4} is at our disposal, we can return to study the normal derivative, with the aim to prove an estimate of the form~\eqref{basis2}. Therefore, following the discussion in Section~\ref{barrier}, we see that for every $\bx_0\in \partial \Omega$, we need to find barriers fulfilling \eqref{ubsub} and \eqref{ubsup} for some $r>0$ with the uniform control
\begin{equation}
\|u^b\|_{1,\infty}\le C(\|u_0\|_{1,1}, \Omega, F) \label{zdeek}
\end{equation}
with the constant independent of $\bx_0$, $r$ and $\lambda$ and $\mu$. We shall finally see why the uniform exterior ball condition plays the key role for the analysis. First, since $\Omega$ is by assumption of class~$\mathcal{C}^{1}$ and satisfies an exterior ball condition, we find positive (from now \emph{fixed}) constants $r_0$, $L$, $L_d$ and~$N$ depending only on~$\Omega$ such that we can suppose that an arbitrary boundary point $\bx_0\in \partial \Omega$  is given, after an orthogonal transformation, by $\bx_0=(\b0, -r_0)$ (we use the notation $\bx=(\bx',x_d)$) and that we have the inclusions
\begin{equation*}
\begin{aligned}
\Gamma & \coloneqq \{\bx\in \mathbb{R}^d \colon |\bx'|< L, \; f(\bx')=x_d\}  \subset \partial \Omega,\\
\Omega_+ & \coloneqq \{\bx\in \mathbb{R}^d \colon |\bx'|< L, \; f(\bx') - L_d < x_d < f(\bx')\} \subset \Omega,\\
\Omega_{-} & \coloneqq \{\bx\in \mathbb{R}^d \colon |\bx'|< L, \; f(\bx')<x_d<f(\bx')+L_d\}  \subset \mathbb{R}^d \setminus\Omega,
\end{aligned}
\end{equation*}
with a function $f\in \mathcal{C}^{1}(-L,L)^{d-1}$ fulfilling $\|f\|_{1,\infty}\le N$, $f(\b0')=-r_0$ and $D_i f(\b0)=0$ for all $i=1,\ldots, d-1$. Furthermore, we can assume that~$r_0$ is fixed such that  $B_{r_0}(\b0)\subset \Omega_{-}$ holds and that for all $\bx \in \Gamma$ we have
\begin{equation}
\label{bound-dist-cond}
M^*(|\bx|-r_0)\ge |\bx-\bx_0|^2
\end{equation}
with some constant~$M^*$ depending only on~$\Omega$ and~$r_0$. From now on, we consider some fixed $\bx_0\in \partial \Omega$.

Next, we introduce the barrier function. For arbitrary $\delta \in (0,1)$ (to be specified later), we fix
$$
q:=(1-\delta)^{d-1}r_0^{d-1}
$$
and consider  functions~$\ber$ and~$\omer$ introduced in~\eqref{DFb} and~\eqref{dfomer}. Using them, we introduce  the function~$\ver$ from~\eqref{w2.2} with the  specific choices $\bk \coloneqq \nabla u_0(\bx_0)$ and $c=u_0(\bx_0) - \nabla u_0(\bx_0) \cdot \bx_0$, that is, with
\begin{equation}\label{dfvef}
\ver(\bx)=\omer(\bx) +\nabla u_0 (\bx_0) \cdot (\bx-\bx_0) + u_0(\bx_0).
\end{equation}
Note that $\ver$ is well defined outside the ball $B_{r_0}(\b0)$ and so it is well-defined also in $\Omega_+$. In addition, it is clear that $|\bk|\le \|\nabla u_0\|_{\infty}$ holds, hence, we fix $K \coloneqq \|\nabla u_0\|_{\infty}$ and also the number~$M$ (depending only on~$F$ and this~$K$) according to Lemma~\ref{L4}.

Next, we specify the maximal value of $\delta$. If we define $\delta_{\max}\in (0,1/2)$ by the relation
\begin{equation}\label{delmax}
(1-2\delta_{\max})^{d-1}:=\max\left\{\frac{M}{M+1}, \frac{ M^* \|u_0\|_{1,\infty}}{1+ M^* \|u_0\|_{1,\infty}} \right\},
\end{equation}
where $M^*$ comes from \eqref{bound-dist-cond} and $M$ from Lemma~\ref{L4}, and $r_{\max}$ as
\begin{equation}\label{rmax}
r_{\max}:=\frac{(1-\delta_{\max})r_0}{1-2\delta_{\max}}
\end{equation}
then for all $\bx \in B_{r_{\max}}(\bx_0)\setminus B_{r_0}(\bx_0)$ and arbitrary $\delta\in (0,\delta_{\max})$, we get by using \eqref{DFb} that
\begin{equation}\label{DFb2}
\begin{split}
\ber(|\bx|)&\ge \ber(r_{\max})= \frac{q}{r^{d-1}_{\max}-q}=\frac{(1-\delta)^{d-1}r_0^{d-1}}{\frac{(1-\delta_{\max})^{d-1}r^{d-1}_0}{(1-2\delta_{\max})^{d-1}}-(1-\delta)^{d-1}r_0^{d-1}}\\
&=\frac{(1-2\delta_{\max})^{d-1}}{\frac{(1-\delta_{\max})^{d-1}}{(1-\delta)^{d-1}}-(1-2\delta_{\max})^{d-1}}\ge \frac{(1-2\delta_{\max})^{d-1}}{1-(1-2\delta_{\max})^{d-1}}\\
&=\max\{M, M^* \|u_0\|_{1,\infty} \},
\end{split}
\end{equation}
where for the last inequality we used \eqref{delmax}. Consequently, using Lemma~\ref{L4}, we see that $\ver$ satisfies the first inequality in \eqref{ubsub}. Thus it remains to show that it also satisfies the second inequality and to choose $\delta$ uniformly, i.e., depending only on $u_0$, $F$ and $\Omega$ in order to get the uniform control on $|\nabla \ver|$.

Next, we want to identify a part of $\Gamma$ on which $\ver(\bx) \geq u(\bx) = u_0(\bx)$ holds, that is, where
\begin{equation}\label{dui}
\omer(\bx) +\nabla u_0(\bx_0)\cdot (\bx -\bx_0) +u_0(\bx_0)-u_0(\bx) \geq 0.
\end{equation}
>From the Taylor expansion of~$u_0$ and the~$\mathcal{C}^{1,1}$-regularity assumption on~$u_0$ we know that
$$
\begin{aligned}
&\left|u_0(\bx)-u_0(\bx_0)-\nabla u_0(\bx_0)\cdot (\bx - \bx_0)\right|\le \|u_0\|_{1,\infty} |\bx-\bx_0|^2,
\end{aligned}
$$
so to verify~\eqref{dui} it is enough to check where
\begin{equation}
\omer(\bx) - \|u_0\|_{1,\infty} |\bx - \bx_0|^2 \geq 0 \label{dui3}
\end{equation}
holds. Using the definitions of $\ber$ in~\eqref{DFb} and of~$\omer$ in~\eqref{dfomer}, combined with the fact that~$(F')^{-1}$ is monotonically increasing, we have for all $\bx\in \Gamma$
\begin{equation*}
\omer(\bx)\ge (|\bx|-r_0) (F')^{-1} \bigg(\frac{(1-\delta)^{d-1}r^{d-1}_0}{|\bx|^{d-1}}\bigg)=(|\bx|-r_0)\frac{(1-\delta)^{d-1}r^{d-1}_0}{|\bx|^{d-1}-(1-\delta)^{d-1}r^{d-1}_0}.
\end{equation*}
Consequently, in order to guarantee~\eqref{dui3} and thus~\eqref{dui} it is sufficient, in view of~\eqref{bound-dist-cond}, to have
\begin{equation*}
\frac{(1-\delta)^{d-1}r^{d-1}_0}{|\bx|^{d-1}-(1-\delta)^{d-1}r^{d-1}_0}\ge M^* \|u_0\|_{1,\infty},
\end{equation*}
which is indeed true for all~$\bx$ with $r_0 \leq |\bx| \leq r_{\max}$, by the choices of the parameter~$\delta_{\max}$ in~\eqref{delmax} and of the radius~$r_{\max}$ in~\eqref{DFb2}. Thus, we have verified
\begin{equation}
\label{boundary_values_Gamma}
u(\bx)\le\ver(\bx) \qquad \textrm{for all } \bx \in \Gamma \textrm{ with } r_0 \leq |\bx|\leq r_{\max}.
\end{equation}

We have already verified that $\ver$ is a supersolution, i.e., satisfies \eqref{ubsub}$_1$ in $B_{r_{\max}}(\bx_0)$ and that also fulfills \eqref{ubsub}$_2$ on $\Gamma\cap B_{r_{\max}}(\bx_0)$. Hence, to show the validity of \eqref{ubsub}, we need to show that it also satisfies \eqref{ubsub}$_2$ on $\partial (B_{r_{\max}}(\bx_0) \cap \Omega)$.  This shall now be done  by a proper choice of a local neighborhood and of~$\delta \in (0,\delta_{\max})$.

Since~$r_0$ and $r_{\max}$ are already fixed (and depend only on on~$\Omega$, $F$  and~$u_0$), we can find two constants~$L^*$ and $L^*_{d} \leq L_d$ sufficiently small such that
\begin{equation*}
\begin{aligned}
\Gamma^*& \coloneqq \{\bx\in \mathbb{R}^d \colon |\bx'|< L^*, \; f(\bx')=x_d\} \subset \partial \Omega,\\
\Omega^*_+& \coloneqq \{\bx\in \mathbb{R}^d \colon |\bx'|< L^*, \; f(\bx') - L^*_d < x_d < f(\bx'\}  \subset \Omega \cap (B_{r_{\max}}\setminus B_{r_0}),\\
\Omega^*_{-}& \coloneqq \{\bx\in \mathbb{R}^d \colon |\bx'|< L^*,  \; f(\bx')<x_d<f(\bx')+L_d\} \subset \mathbb{R}^d \setminus\Omega.
\end{aligned}
\end{equation*}
Having introduced this notation, we see with help of~\eqref{DFb2} and~\eqref{boundary_values_Gamma} that $\ver$ solves \eqref{ubsub}$_1$ in the relative neighborhood~$\Omega^*_+$ of~$\bx_0$ and satisfies $\ver \geq u$ on~$\Gamma^*$, for all $\delta < \delta_{\max}$. Our goal is to show $\ver\ge u$ on the remaining part of $\partial \Omega_+^*$.

We start the proof by noting that there exists a positive $\eta>0$ (independent of $\delta$) such that
$$
|\bx|\ge r_0 + \eta
$$
for all $\bx \in \partial \Omega_+^* \setminus \Gamma^*$, which follows from the choices of~$r_0$ and~$\Omega_+^*$.
Hence, from the definition of~$\ver$ in~\eqref{dfvef} and of~$\omer$ in~\eqref{dfomer} we find
\begin{equation}
\ver(\bx) \ge \int_{r_0}^{r_0 +\eta} \ber (r) \dr - C^*(\Omega)  \|u_0\|_{1,\infty}. \label{nemohu}
\end{equation}
On the other hand, we know $\|u\|_{\infty} \le \|u_0\|_{\infty}$ from~\eqref{AE3}. Thus, in order to show that $\ver \ge u$ on $\partial \Omega_+^*$, we need to choose  $\delta\in (0,\delta_{\max})$ such that
\begin{equation}
\int_{r_0}^{r_0 +\eta} \ber (r) \dr \ge C^*(\Omega)  \|u_0\|_{1,\infty} + \|u_0\|_{\infty}.\label{konec}
\end{equation}
Using the definition of~$\ber$ in~\eqref{DFb} and the substitution formula, we deduce that
\begin{align*}
\int_{r_0}^{r_0 +\eta} \ber (r) \dr &=\int_{r_0}^{r_0 + \eta}\frac{(1-\delta)^{d-1}r^{d-1}_0}{r^{d-1}-(1-\delta)^{d-1}r^{d-1}_0}\dr\\
&=r_0\int_{1}^{1+\frac{\eta}{r_0}}\frac{(1-\delta)^{d-1}}{t^{d-1}-(1-\delta)^{d-1}}\dt\\
&\ge \frac{r_0(1-\delta)^{d-1}}{\left(1+\frac{\eta}{r_0}\right)^{d-2}}\int_{1}^{1+\frac{\eta}{r_0}}\frac{t^{d-2}}{t^{d-1}-(1-\delta)^{d-1}}\dt\\
&\ge\frac{r_0(1-\delta_{\max})^{d-1}}{(d-1)\left(1+\frac{\eta}{r_0}\right)^{d-2}}\ln\left(\frac{(1+\frac{\eta}{r_0})^{d-1}-(1-\delta)^{d-1}}{1-(1-\delta)^{d-1}}\right).
\end{align*}
It is important to notice that the right hand side tends to $\infty$ as $\delta \to 0_+$. Consequently, using the above inequality, we can fix $\delta\in (0, \delta_{\max})$ (depending only on~$\Omega$,~$F$ and $u_0$) such that \eqref{konec} holds true. Hence, we have constructed the upper barrier  function $u^b$ on the relative neighborhood~$\Omega_+$ of~$\bx_0$.  The lower barrier $u_b$ is, however, constructed in the very similar manner. As the domain is of class $C^1$ and satisfies the uniform exterior ball condition, by the same procedure we can prove existence of both upper and lower barriers in every point on the boundary $\partial \Omega$. Thus, these can be merged to form an upper and lower barrier for the function $u_{\lambda,\mu}$ on whole $\partial \Omega$.

To summarize, thanks to the existence of barriers, we obtain \eqref{wened}. Since the $W^{1,\infty}$-norm of barriers depends only on $F$, $u_0$ and $\Omega$ and most importantly is independent of $\lambda$ and $\mu$. Hence, using (\ref{supersolu}), we can choose  $\lambda$ sufficiently large in order to have \eqref{muin}, which implies \eqref{dream} and finishes the proof.

%\bibliographystyle{amsplain}
%\bibliography{literature}
%
%\end{document}

\providecommand{\bysame}{\leavevmode\hbox to3em{\hrulefill}\thinspace}
\providecommand{\MR}{\relax\ifhmode\unskip\space\fi MR }
% \MRhref is called by the amsart/book/proc definition of \MR.
\providecommand{\MRhref}[2]{%
  \href{http://www.ams.org/mathscinet-getitem?mr=#1}{#2}
}
\providecommand{\href}[2]{#2}

\end{document}